\providecommand{\U}[1]{\protect\rule{.1in}{.1in}}
\newtheorem{theorem}{Theorem}
\newtheorem{definition}[theorem]{Definition}
\newtheorem{lemma}[theorem]{Lemma}
\newtheorem{proposition}[theorem]{Proposition}
\newtheorem{remark}[theorem]{Remark}
\newenvironment{proof}[1][Proof]{\noindent\textbf{#1.} }{\ \rule{0.5em}{0.5em}}
\begin{document}

\title{Standing waves in a counter-rotating vortex filament pair\thanks{This is a
corrected version of the printed article.}}
\author{Carlos Garc\'{\i}a-Azpeitia\thanks{Departamento de Matem\'{a}ticas, Facultad
de Ciencias, Universidad Nacional Aut\'{o}noma de M\'{e}xico, 04510 M\'{e}xico
DF, M\'{e}xico}}
\maketitle

\begin{abstract}
The distance among two counter-rotating vortex filaments satisfies a beam-type
of equation according to the model derived in \cite{Ma95}. This equation has
an explicit solution where two straight filaments travel with constant speed
at a constant distance. The boundary condition of the filaments is $2\pi
$-periodic. Using the distance of the filaments as bifurcating parameter, an
infinite number of branches of periodic standing waves bifurcate from this
initial configuration with constant rational frequency along each branch.

MSC: 35B10, 35B32

Keywords: Vortex filaments. Periodic solutions. Bifurcation.

\end{abstract}

\section*{Introduction}

In \cite{Ma95} is derived a model for the movement of almost-parallel vortex
filaments from the three-dimensional Euler equation. This model takes in
consideration the interaction between different filaments and an approximation
for the self-induction of each filament. The paper \cite{Ma95} presents a
first analysis of the finite time collapse of two filaments with negative
circulations; close to collapse, the model of vortex filaments as an
approximation to the Euler equation loses validity. Later, \cite{Po03} proves
that two filaments with positive circulations, and also three filaments with
positive circulations near an equilateral triangle, evolve without collapse
for all time. On the other hand, the evidence shown in
\cite{BaMi,BaMi12,Ma95,Ne01} suggests that two filaments with opposite
circulations develop collapse for many initial configurations. Our aim is to
investigate the existence of nontrivial periodic solutions of two vortex
filaments with opposite circulations, which evolve without collapse and
remains valid within the hypothesis of the model for all time.

The counter-rotating filament pair consists of two filaments with opposite
circulations and same strength. In the model deduced in \cite{Ma95}, the
almost parallel filaments are parameterized by
\[
(u_{j}(t,s),s)\in\mathbb{C}\times\mathbb{R}~,\text{\qquad}j=1,2~,
\]
and the distance among the filaments $w_{1}=u_{1}-u_{2}$ satisfies the
beam-type of equation%
\begin{equation}
\partial_{t}^{2}w_{1}=-\partial_{s}^{4}w_{1}+\partial_{s}^{2}\left(
\left\vert w_{1}\right\vert ^{-2}w_{1}\right)  \text{.} \label{EQ}%
\end{equation}
This equation has the explicit solution $w_{1}(t,s)=a$ that corresponds to the
solution of two straight filaments traveling with speed $a^{-1}$ at distance
$a$. The aim is to construct $2\pi/\nu$-periodic families of standing wave
bifurcating from this initial configuration, where the filaments have $2\pi
$-periodic boundary condition.

The present paper adopts the strategy followed in \cite{Ki79} for the wave
equation, where bifurcation of periodic solutions is proven to exist using
external parameters such as the amplitude, while the frequency is a fixed
rational. In \cite{Ki00} and \cite{Ry01} this result was improved to obtain
global bifurcation of perio\-dic solutions in spherical domains. A main
difference with our result is that the equation is semilinear and requires
special estimates.

\begin{theorem}
\label{1}For each number $q$, there is an infinite number of non-resonant
(Definition \ref{NR}) amplitudes $a_{0}$'s given by
\begin{equation}
a_{0}^{-2}:=\frac{2}{q}-\frac{1}{k_{0}^{2}q^{2}} \label{a}%
\end{equation}
for some $k_{0}\in\mathbb{N}$. For each of these non-resonant $a_{0}$'s, there
is a local continuum of $2\pi q/p$-periodic solution bifurcating from the
straight filaments with distance $a_{0}$, where $p=qk_{0}^{2}-1$. The local
bifurcation consists of standing waves satisfying the symmetries%
\begin{align}
w_{1}(t,s)  &  =w_{1}(-t,s)=w_{1}(t,-s)=w_{1}(t,s+2\pi/k_{0})\nonumber\\
&  =\bar{w}_{1}\left(  t+l_{0}(q\pi/p),s\right)  \text{,} \label{sym}%
\end{align}
where $l_{0}=0$, and the estimate%
\begin{equation}
w_{1}(t,s)=a_{0}+i^{l_{0}}b\cos\left(  pt/q\right)  \cos k_{0}s+\mathcal{O}%
_{C^{4}}(b^{2})\text{,} \label{es}%
\end{equation}
where $b\in\lbrack0,b_{0}]$ gives a parameterization of the local bifurcation.
\end{theorem}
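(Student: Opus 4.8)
The plan is to treat \eqref{EQ} as a bifurcation problem with the distance as parameter and apply bifurcation from a simple eigenvalue (Crandall--Rabinowitz), the point being that fixing a \emph{rational} frequency $\nu=p/q$ removes the small divisors that would otherwise obstruct the linearized inversion. Writing $w_1=a+w$, set $F(w,a):=-\partial_t^2 w-\partial_s^4 w+\partial_s^2\big(|a+w|^{-2}(a+w)\big)$; then $F(0,a)=\partial_s^2(a^{-1})=0$, so the constant configurations are the trivial branch, and since $a+w$ stays near the positive constant $a$ for $w$ small, $w\mapsto|a+w|^{-2}(a+w)$ is real-analytic and $F$ is smooth. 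Its $w$-linearization at $w=0$ is $\mathcal L_a w=-\partial_t^2 w-\partial_s^4 w+a^{-2}\partial_s^2(-\operatorname{Re}w+i\operatorname{Im}w)$, which is self-adjoint on $L^2(\mathbb T^2;\mathbb C)$, preserves real and imaginary parts, and on the mode $\cos(n p t/q)\cos(k s)$ acts by the scalar $\mu=(np/q)^2-k^4+\sigma\,a^{-2}k^2$, with $\sigma=+1$ on the real part and $\sigma=-1$ on the imaginary part. Hence $\mu$ vanishes at $(n,k)=(1,k_0)$ on the part with $\sigma=(-1)^{l_0}$ exactly when $a^{-2}$ has the value \eqref{a}.

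Next I would fix the closed subspace $X$ of functions obeying the symmetries \eqref{sym} --- cosine series in $t$ and in $s$, spatial harmonics in $k_0\mathbb Z$, and the fundamental time-harmonic carried by the real part if $l_0=0$ and by the imaginary part if $l_0=1$ --- and impose zero mean over $\mathbb T^2$ to kill the residual constant mode of the trivial branch. The decisive linear fact is that on $X$ there are no small divisors: inserting $k=mk_0$ and \eqref{a} into $\mu$ and multiplying by $q^2$ gives
\[
q^2\mu=n^2p^2-q^2m^4k_0^4+m^2\big(q^2k_0^4-p^2\big)\in\mathbb Z,
\]
so every nonzero $\mu$ on $X$ satisfies $|\mu|\ge q^{-2}$. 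Definition \ref{NR} is precisely the requirement that this integer vanish only at the designated mode $(n,m)=(1,1)$; for such non-resonant $a_0$, $\mathcal L_{a_0}|_X$ has one-dimensional kernel spanned by $\varphi^*=i^{l_0}\cos(pt/q)\cos k_0 s$, and the uniform gap $|\mu|\ge q^{-2}$ makes it an isomorphism from its domain onto its closed range $(\ker)^\perp$ in an appropriate Banach-space scale. Realizing that scale so that the semilinear term $\partial_s^2(|w_1|^{-2}w_1)$ --- two orders below the principal part $-\partial_s^4$ --- becomes a genuinely smooth superposition operator between domain and target is the delicate point: since $\mathcal L_{a_0}^{-1}$ is bounded but not globally smoothing on the modes where $|\mu|$ is of size $q^{-2}$ (near the resonance curve $np\approx qk^2$), the spaces must be anisotropic, weighting time- and space-derivatives consistently with that curve. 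These are the ``special estimates'' flagged in the introduction, and I expect this to be the main obstacle.

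With the functional-analytic framework in place the remainder is routine. Transversality holds because $\partial_a\mathcal L_a\big|_{a=a_0}\varphi^*=-2a_0^{-3}\partial_s^2(-\operatorname{Re}\varphi^*+i\operatorname{Im}\varphi^*)$ is a nonzero multiple of $\varphi^*$, hence does not lie in $\operatorname{range}\mathcal L_{a_0}=(\ker)^\perp$. Crandall--Rabinowitz then produces a local $C^1$ (indeed $C^\infty$, since $F$ is smooth) curve of nontrivial zeros $(w(b),a(b))$, $b\in[0,b_0]$, with $w(b)=b\varphi^*+o(b)$ and $a(0)=a_0$; the second-order bifurcation expansion together with the regularity of the branch upgrades this to \eqref{es}, the remainder being controlled in $C^4$ (the four $s$-derivatives separating $w$ from $F(w,\cdot)$). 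Since $F$ and the whole construction commute with the finite group generated by $t\mapsto-t$, $s\mapsto-s$, $s\mapsto s+2\pi/k_0$ and the conjugation-twist $w\mapsto\bar w(\,\cdot+l_0 q\pi/p,\cdot\,)$, whose fixed-point set is $X$, the branch stays in $X$ and so satisfies \eqref{sym}.

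Finally one must exhibit infinitely many non-resonant amplitudes. The constraint $a_0^{-2}\in(0,1/q)$ in \eqref{a} confines, for each $k_0$, the integer $p$ to a prescribed window, and an elementary counting argument over $k_0$ --- discarding in each window the finitely many $p$ for which the integer $q^2\mu$ above has a zero other than $(n,m)=(1,1)$ (these zeros correspond to factorizations of the fixed integer $q^2k_0^4-p^2$, hence are finite in number) --- yields infinitely many admissible pairs $(p,k_0)$ for a suitable choice of $l_0$, and thus infinitely many non-resonant $a_0$. This number-theoretic bookkeeping is somewhat intricate but I regard it as secondary to the analytic difficulty above.
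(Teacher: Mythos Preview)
Your overall strategy coincides with the paper's: fix a rational frequency, restrict to the fixed-point space of the finite symmetry group, and apply Crandall--Rabinowitz with the distance $a$ as bifurcation parameter. The transversality check and the symmetry bookkeeping are correct.

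The genuine gap is the analytic estimate you flag but do not resolve. From $q^2\mu\in\mathbb Z$ you extract only the uniform lower bound $|\mu|\ge q^{-2}$ and then propose anisotropic spaces to make up the missing two $s$-derivatives coming from the semilinear term $\partial_s^2 g$. The paper shows that no anisotropy is needed: the hypothesis $a_0^{-2}\in(0,1/q)$ --- which you invoke only in the final counting step --- is precisely what upgrades the uniform bound to the growth estimate $|\lambda_{j,k,l}(a_0)|\gtrsim k^2+|j|$ on the complement of the kernel. Indeed, writing $\lambda=(pj/q-k^2)(pj/q+k^2)\pm a_0^{-2}k^2$ and using that $|pj/q-k^2|\ge 1/q$ whenever $pj\ne qk^2$, one gets
\[
|\lambda|\;\ge\;(1/q-a_0^{-2})\,k^2+(p/q^2)\,|j|,
\]
and the coefficient of $k^2$ is positive by assumption; the exceptional case $pj=qk^2$ gives $|\lambda|=a_0^{-2}k^2$ directly. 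This makes $(PLP)^{-1}\partial_s^2$ bounded on ordinary Sobolev spaces $H^s$, so the range equation in a Lyapunov--Schmidt reduction is solved by a plain contraction, after which Crandall--Rabinowitz is applied on the (finite-dimensional) kernel. Without this observation your proposal is incomplete at exactly the point you yourself identify as the main obstacle.

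Two smaller remarks. First, your displayed integer formula for $q^2\mu$ records only the sign $(-1)^{l+l_0}=+1$; when $l_0=1$ the symmetry subspace carries both parities (even time-harmonics in the real part, odd in the imaginary part), though the integrality conclusion survives with the opposite sign on the $m^2$ term. Second, your non-resonance sketch via ``factorizations of $q^2k_0^4-p^2$'' is not the paper's argument: the paper first observes that $N$ is finite for each admissible $a_0$ (an immediate corollary of the growth estimate above), then picks a \emph{maximal} lattice point in $N$ and absorbs its time-index into $p$ so that $j_0=1$, which yields the non-resonance condition in Definition~\ref{NR} without having to discard any $p$'s.
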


The symmetries imply that the standing waves are even in $t$ and even and
$2\pi/k_{0}$-periodic in $s$. Setting $w_{1}=x+iy$, for $l_{0}=0$, the
symmetry (\ref{sym}) implies that $y(t,s)=0$, i.e. the orbits of the standing
waves are orthogonal to the traveling direction of the filaments. While for
$l_{0}=1$, this symmetry implies that
\[
x(t,s)=x\left(  t+(q\pi/p),s\right)  ,\qquad y(t,s)=-y\left(  t+(q\pi
/p),s\right)  ,
\]
i.e. the orbits of the standing waves resemble eight
figures.\begin{figure}[ptbh]
\resizebox{12.0cm}{!}{
\begin{pspicture}(-2.5,-2)(2.5,2)\SpecialCoor
\pcline[offset=12pt]{[-]}(-2,-1.5)(2,-1.5)
\ncput*[nrot=:U]{$a_{0}$}
\psline[arrowscale=1]{*->}(0,0)(.5,.5)
\rput[b](.5,.5){$\small a^{-1}_{0}$}
\psline(-2,-1.5)(-2,1.5)
\psline[linestyle=dotted](-1.5,-1.5)(-1.5,1.5)
\psline[linestyle=dotted](-2.5,-1.5)(-2.5,1.5)
\psline[linestyle=dashed,arrowscale=1.5]{<-}(-1.5,1.5)(-2.5,1.5)
\psline[linestyle=dashed,arrowscale=1.5]{->}(-1.5,0)(-2.5,0)
\psline[linestyle=dashed,arrowscale=1.5]{<-}(-1.5,-1.5)(-2.5,-1.5)
\psecurve(-1.5,3)(-2.5,1.5)(-1.5,0)(-2.5,-1.5)(-1.5,-3)
\psline(2,-1.5)(2,1.5)
\psline[linestyle=dotted](1.5,-1.5)(1.5,1.5)
\psline[linestyle=dotted](2.5,-1.5)(2.5,1.5)
\psline[linestyle=dashed,arrowscale=1.5]{<-}(1.5,1.5)(2.5,1.5)
\psline[linestyle=dashed,arrowscale=1.5]{->}(1.5,0)(2.5,0)
\psline[linestyle=dashed,arrowscale=1.5]{<-}(1.5,-1.5)(2.5,-1.5)
\psecurve(1.5,3)(2.5,1.5)(1.5,0)(2.5,-1.5)(1.5,-3)
\NormalCoor\end{pspicture}
\qquad
\begin{pspicture}(-2.5,-2)(2.5,2)
\SpecialCoor
\pcline[offset=12pt]{[-]}(-2,-1.5)(2,-1.5)
\ncput*[nrot=:U]{$a_{0}$}
\psline[arrowscale=1]{*->}(0,0)(.5,.5)
\rput[b](.5,.5){$\small a^{-1}_{0}$}
\psline(-2,-1.5)(-2,1.5)
\psline[linestyle=dotted](-2.5,1)(-2.5,-2)
\psline[linestyle=dotted](-1.5,2)(-1.5,-1)
\psline[linestyle=dashed]{->}(-2.5,1)(-1.5,2)
\psline[linestyle=dashed]{->}(-1.5,.5)(-2.5,-.5)
\psline[linestyle=dashed]{->}(-2.5,-2)(-1.5,-1)
\psecurve(-1.5,3.5)(-2.5,1)(-1.5,.5)(-2.5,-2)(-1.5,-2.5)
\psline(2,-1.5)(2,1.5)
\psline[linestyle=dotted](1.5,1)(1.5,-2)
\psline[linestyle=dotted](2.5,2)(2.5,-1)
\psline[linestyle=dashed]{<-}(1.5,1)(2.5,2)
\psline[linestyle=dashed]{<-}(2.5,.5)(1.5,-.5)
\psline[linestyle=dashed]{<-}(1.5,-2)(2.5,-1)
\psecurve(1.5,2.5)(2.5,2)(1.5,-.5)(2.5,-1)(1.5,-3.5)
\NormalCoor
\end{pspicture}
}\caption{Illustration of the two kind of solutions bifurcating from the
straight counter-rotating vortex filaments, initially separated by $a_{0}$ and
traveling with speed $a_{0}^{-1}$. Left: case\ $l_{0}=0$. Rigth: case
$l_{0}=1$.}%
\end{figure}
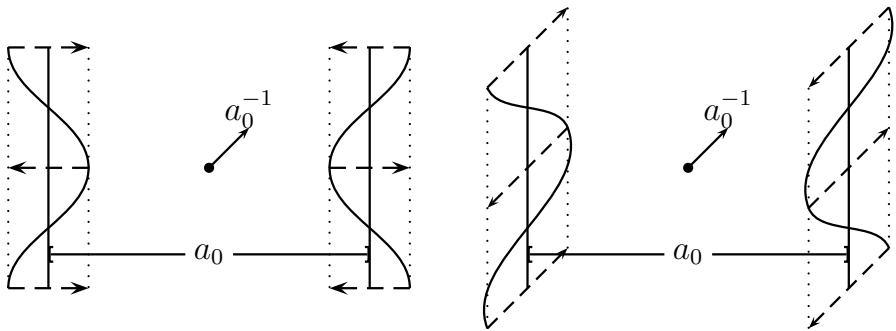

In \cite{GaCr15} the existence of standing waves for $n$ vortex filaments of
equal vorticities from a uniformly rotating central configuration is
investigated. In the case of two filaments, the distance $w_{1}(s,t)$
satisfies the Schr\"{o}dinger equation $\partial_{t}w_{1}=i\left(
\partial_{ss}w_{1}+\left\vert w_{1}\right\vert ^{-2}w_{1}\right)  $, which has
the explicit solution $ae^{ia^{-2}t}$ that corresponds to the solution where
the two filaments rotate with frequency $a^{-2}$ at distance $a$. This article
proves that the co-rotating filament pair has families of standing waves with
amplitudes varying over a Cantor set for irrational diophantine frequencies
$a^{-2}$. In order to solve the small divisor problem that appears due the
fact that the standing waves have irrational frequencies, \cite{GaCr15}
implements a Nash-Moser procedure. This result is different but complementary
to the existence of standing waves with rational frequencies in the
counter-rotating filament pair. Indeed, the method in \cite{GaCr15} can be
used to obtain standing waves with irrational frequencies in the
counter-rotating filament pair. The method presented here can be used to
obtain standing waves with rational frequencies in the co-rotating filament pair.

Nash-Moser methods for wave, Schr\"{o}dinger and beam equations have been
implemented in \cite{Bo95}, \cite{Be07}, \cite{CrWa93} and references therein.
Different methods which do not involve small divisor problems have been
developed to prove existence of periodic solutions. In these methods, the
frequency is fixed to a rational or a badly approximated irrational. For
rational frequencies, the linear operator has isolated point spectrum, but the
kernel associated to the bifurcation problem may have infinite dimension, see
\cite{AmZe80}, \cite{Ki79} and \cite{Ra78}. For strong irrational frequencies,
the inverse of the linear operator is bounded, but the inverse lacks
compactness, see \cite{Ba}, \cite{Be07} and \cite{Ra}. These methods have
limited applicability to semilinear beam equations \cite{Ki79,Ba}, which is
the case of our problem, and also in Schr\"{o}dinger equations, which is the
case of the co-rotating vortex filament pair. We recommend \cite{Be07} for an
overview of different applications of these methods to Hamiltonian PDEs.

The proof of our theorem relies on the fact that the inverse operator
associated to the bifurcation problem gains two spatial derivatives which
compensates the derivatives appearing in the nonlinearity. More precisely, the
bifurcation problem is equivalent to solve
\[
L(a)u+\partial_{s}^{2}g(u)=0,
\]
for a perturbation $u$, where $L$ is the linearized operator and $g(u)$ is an
analytic nonlinear operator. In the Fourier basis given by $e^{i\left(
jt+ks\right)  }$, the eigenvalues of $L$ are%
\[
\lambda_{j,k,l}(a)=\left(  pj/q\right)  ^{2}-k^{4}+(-1)^{l}a^{-2}k^{2}\text{
,}%
\]
for $\left(  j,k,l\right)  \in\mathbb{Z}^{2}\times\mathbb{Z}_{2}$. The
eigenvalues $\lambda_{\pm1,\pm k_{0},l_{0}}$ are zero at $a_{0}$ and the
others satisfy
\[
\lambda_{j,k,l}(a_{0})\in\left(  qk_{0}\right)  ^{-2}\mathbb{Z}\text{.}%
\]
Thus, the operator $L(a)$ can be inverted in the orthogonal complement of the
kernel for a neighborhood of $a_{0}$ . By choosing $a_{0}^{-2}\in\left(
0,2/q\right)  $, the projected inverse $\left(  PLP\right)  ^{-1}$ gains two
spatial derivatives due to the sharp estimate
\[
\lambda_{j,k,l}(a)\gtrsim k^{2}+\left\vert j\right\vert ~.
\]
However, the inverse $\left(  PLP\right)  ^{-1}$ does not gain extra
derivatives and $\partial_{s}^{2}\left(  PLP\right)  ^{-1}$ lacks the
necessary compactness to establish the global bifurcation by the classical
Rabinowitz approach.

The paper is structured as follows. In Section 1, we present the equation that
describes the dynamics of the distance of two straight vortex filaments. In
Section 2 the existence of standing waves is obtained by the Lyapunov-Schmidt
reduction method. In Section 3 the range equation is solved by the contracting
mapping theorem. In Section 4 the bifurcation equation is solved using the
symmetries of the problem and the Crandall-Rabinowitz theorem. Existence of
traveling waves solutions is discussed in Section 5.

\section{Setting the problem}

The counter-rotating filament pair consists of two filaments with circulations
$\Gamma_{1}=1$ and $\Gamma_{2}=-1$. According to \cite{Ma95}, the equations
that describe the dynamics of two almost parallel filaments are%
\begin{align*}
\partial_{t}u_{1}  &  =i\left(  \partial_{s}^{2}u_{1}-\frac{1}{2}\frac
{u_{1}-u_{2}}{\left\vert u_{1}-u_{2}\right\vert ^{2}}\right)  \text{,}\\
\partial_{t}u_{2}  &  =i\left(  -\partial_{s}^{2}u_{2}+\frac{1}{2}\frac
{u_{2}-u_{1}}{\left\vert u_{2}-u_{1}\right\vert ^{2}}\right)  \text{.}%
\end{align*}
The factor $1/2$ may be obtained by scaling the dimensions and is useful in
the discussion of our analysis.

The coordinates
\[
w_{1}=u_{1}-u_{2},\qquad w_{2}=u_{1}+u_{2},
\]
represent the distance and the center of mass of two filaments, respectively.
In these coordinates, the equations are
\[
\partial_{t}w_{1}=i\partial_{s}^{2}w_{2},\qquad\partial_{t}w_{2}=i\left(
\partial_{s}^{2}w_{1}-\left\vert w_{1}\right\vert ^{-2}w_{1}\right)  \text{.}%
\]
Therefore, the distance $w_{1}$ satisfies the equation%
\begin{equation}
\partial_{t}^{2}w_{1}=i\partial_{s}^{2}\partial_{t}w_{2}=-\partial_{s}%
^{4}w_{1}+\partial_{s}^{2}(\left\vert w_{1}\right\vert ^{-2}w_{1})\text{,}
\label{v}%
\end{equation}
and the center of mass $w_{2}$ can be obtained from $w_{1}$ by integration:%
\begin{equation}
w_{2}(t,s)=i\int_{0}^{t}\left(  \partial_{s}^{2}w_{1}-\left\vert
w_{1}\right\vert ^{-2}w_{1}\right)  dt+w_{2}(0,s)\text{.} \label{w}%
\end{equation}

The explicit solution%
\[
w_{1}(t,s)=a\qquad w_{2}(t,s)=-ia^{-1}t\text{,}%
\]
corresponds to the solution where the filaments travel with constant speed. We
look for bifurcation of solution from this initial configuration of the form
\[
w_{1}(t,s)=a(1-u(\nu t,s)),
\]
where $u$ is $2\pi$-periodic in $t$ and $s$.

The equation that satisfies the perturbation $u$ is%
\[
\nu^{2}\partial_{t}^{2}u=-\partial_{s}^{4}u+\frac{1}{a^{2}}\partial_{s}%
^{2}\left(  \frac{1}{1-\bar{u}}\right)  \text{.}%
\]
Using a Taylor expansion, this equation is equivalent to%
\begin{equation}
\nu^{2}\partial_{t}^{2}u=-\partial_{s}^{4}u+a^{-2}\partial_{s}^{2}\bar
{u}+\partial_{s}^{2}g(\bar{u})\text{,}%
\end{equation}
where%

\begin{equation}
g(\bar{u})=a^{-2}\frac{\bar{u}^{2}}{1-\bar{u}}=a^{-2}\sum_{j=2}^{\infty}%
\bar{u}^{j}%
\end{equation}
is analytic for $\left\vert u\right\vert <1$.

\section{The Lyapunov-Schmidt reduction}

Hereafter the frequency $\nu$ is fixed to the rational%
\[
\nu=\frac{p}{q},
\]
where $p$ and $q$ are relative prime. In order to simplify the analysis of
symmetries, the equation is changed to the real coordinates given by
$u=(x,y)\in\mathbb{R}^{2}$. In real coordinates, the equation is given by%
\begin{equation}
Lu+\partial_{s}^{2}g(u)=0, \label{EQL}%
\end{equation}
where $L$ is the linear operator
\begin{equation}
Lu:=-\nu^{2}\partial_{t}^{2}u-\partial_{s}^{4}u+a^{-2}R\partial_{s}%
^{2}u\text{,}%
\end{equation}
where
\[
R=diag(1,-1),
\]
and $g(u)=\mathcal{O}\left(  \left\vert u\right\vert ^{2}\right)  $ is
analytic for $\left\vert (x,y)\right\vert <1$.

We present some definitions and useful results about Sobolev spaces before
implementing the Lyapunov-Schmidt reduction. We use the inner product in the
space $L^{2}(T^{2};\mathbb{R}^{2})$ given by%
\[
\left\langle u_{1},u_{2}\right\rangle =\frac{1}{(2\pi)^{2}}\int_{T^{2}}%
u_{1}\cdot u_{2}~dt~ds\text{.}%
\]
Functions $u\in L^{2}(T^{2};\mathbb{R}^{2})$ have the Fourier representation%
\[
u=\sum_{(j,k)\in\mathbb{Z}^{2}}u_{j,k}e^{i(jt+ks)},\qquad u_{j,k}=\bar
{u}_{-j,-k}\in\mathbb{C}^{2}.
\]
The Sobolev space $H^{s}$ is the subspace of functions in $L^{2}$ with bounded
norm%
\[
\left\Vert u\right\Vert _{H^{s}}^{2}=\sum_{(j,k)\in\mathbb{Z}^{2}}\left\vert
u_{j,k}\right\vert ^{2}\left(  j^{2}+k^{2}+1\right)  ^{s}\text{.}%
\]
This space has the Banach algebra property for $s>1$,%
\[
\left\Vert uv\right\Vert _{H^{s}}\leq\left\Vert u\right\Vert _{H^{s}%
}\left\Vert v\right\Vert _{H^{s}}.
\]

The Banach algebra property implies that the nonlinear operator
$g(u)=\mathcal{O}(\left\Vert u\right\Vert _{H^{s}}^{2})$ is well defined and
continuous for $\left\Vert u\right\Vert _{H^{s}}<1$. The Lyapunov-Schmidt
reduction is implemented in the Sobolev space of functions with zero average,%
\[
H_{0}^{s}(T^{2};\mathbb{R}^{2})=\left\{  u\in H^{s}(T^{2};\mathbb{R}^{2}%
):\int_{T^{2}}u=0\right\}  .
\]

The linear operator $L:D(L)\rightarrow H_{0}^{s}$ is continuous when the
domain%
\[
D(L)=\{u\in H_{0}^{s}:Lu\in H_{0}^{s}\},
\]
is completed under the graph norm
\[
\left\Vert u\right\Vert _{L}^{2}=\left\Vert Lu\right\Vert _{H_{0}^{s}}%
^{2}+\left\Vert u\right\Vert _{H_{0}^{s}}^{2}~.
\]
In Fourier basis, the operator $L:D(L)\rightarrow H_{0}^{s}$ is given by
\[
Lu=\sum_{(j,k)\in\mathbb{Z}_{0}^{2}}\left(  \nu^{2}j^{2}I-k^{4}I+a^{-2}%
k^{2}R\right)  u_{j,k}e^{i(jt+ks)},
\]
where%
\[
\mathbb{Z}_{0}^{2}=\mathbb{Z}^{2}\backslash\{(0,0)\}\text{.}%
\]
Then, the eigenvalues of $L$ are
\begin{equation}
\lambda_{j,k,l}=\left(  \nu j\right)  ^{2}-k^{4}+\left(  -1\right)  ^{l}%
a^{-2}k^{2}\text{,}%
\end{equation}
for $(j,k,l)\in\mathbb{Z}_{0}^{2}\times\mathbb{Z}_{2}$. The set of
eigenfunctions of $L$, given by $e_{l}e^{i(jt+ks)}$ with
\[
e_{0}=(1,0)\text{ and }e_{1}=(0,1)\text{,}%
\]
is orthonormal and complete:
\[
Lu=\sum_{(j,k,l)\in\mathbb{Z}_{0}^{2}\times\mathbb{Z}_{2}}\lambda
_{j,k,l}\left\langle u,e_{l}e^{i(jt+ks)}\right\rangle e_{l}e^{i(jt+ks)}%
\text{.}%
\]

Choosing $a_{0}$ such that%
\begin{equation}
a_{0}^{-2}=\left(  -1\right)  ^{l_{0}}\left(  k_{0}^{2}-\left(  pj_{0}%
/qk_{0}\right)  ^{2}\right)  \label{a0}%
\end{equation}
for a fixed $\left(  j_{0},k_{0},l_{0}\right)  \in\mathbb{N}^{2}%
\times\mathbb{Z}_{2}$, we have $\lambda_{\pm j_{0},\pm k_{0},l_{0}}\left(
a_{0}\right)  =0$; the other eigenvalues satisfy%

\begin{equation}
\lambda_{j,k,l}(a_{0})=\left(  qj/p\right)  ^{2}-k^{4}+(-1)^{l+l_{0}}\left(
k_{0}^{2}-\left(  pj_{0}/qk_{0}\right)  ^{2}\right)  k^{2}\text{.}%
\end{equation}

\begin{definition}
Let $N\subset\mathbb{Z}_{0}^{2}\times\mathbb{Z}_{2}$ be the subset of all
lattice points corresponding to zero eigenvalues,
\[
N=\left\{  \left(  j,k,l\right)  \in\mathbb{Z}_{0}^{2}\times\mathbb{Z}%
_{2}:\lambda_{j,k,l}\left(  a_{0}\right)  =0\right\}  \text{.}%
\]

\end{definition}

By definition we have that the kernel of $L(a_{0})$ is generated by
eigenfunctions $e_{l}e^{i(jt+ks)}$ with $\left(  j,k,l\right)  \in N$. Notice
that additional sites to $(\pm j_{0},\pm k_{0},l_{0})$ may be present in $N$
due to resonances.

The Lyapunov-Schmidt reduction separates the kernel and the range equations
using the projections%
\[
Qu=\sum_{\left(  j,k,l\right)  \in N}u_{j,k,l}e_{l}e^{i(jt+ks)},\qquad
Pu=(I-Q)u.
\]
Setting
\[
u=v+w,\qquad v=Qu,\qquad w=Pu\text{,}%
\]
equation (\ref{EQL}) is equivalent to the kernel equation%
\begin{equation}
QLQv+Q\partial_{s}^{2}g(v+w)=0,
\end{equation}
and the range equation%
\begin{equation}
PLPw+P\partial_{s}^{2}g(v+w)=0.
\end{equation}

\begin{proof}
[Proof of Theorem \ref{1}]The proof is split in three propositions. In
Proposition \ref{2} we use the contraction mapping theorem to prove that the
range equation has a unique solution $w(v,a)\in H_{0}^{s}$ defined in a
neighborhood of $(0,a_{0})$, where $w=\mathcal{O}(\left\Vert v\right\Vert
_{H_{0}^{s}}^{2})$. Using this solution in the kernel equation we obtain the
bifurcation equation%
\begin{equation}
QLQv+Q\partial_{s}^{2}g(v+w(v,a))=0\text{,}%
\end{equation}
which is defined in a neighborhood of $(0,a_{0})\in\ker L(a_{0})\times
\mathbb{R}$.

Proposition \ref{4} proves that for each fixed positive $q$ there is an
infinite number of non-resonant amplitudes $a_{0}^{-2}\in\left(  0,2/q\right)
$ with $j_{0}=1$ and $l_{0}=0$. In Proposition \ref{3}, using the symmetries
and a non-resonant amplitude $a_{0}$, the bifurcation equation is reduced to a
subspace of dimension one within the kernel. Then, the existence of the local
bifurcation is obtained by the Crandall-Rabinowitz theorem, which gives the
estimates $v(t,a)=be_{l_{0}}\cos t\cos k_{0}s+\mathcal{O}(b^{2})$ and
$a=a_{0}+\mathcal{O}(b^{2})$ for $b\in\lbrack0,b_{0}\mathbb{]}$.

Estimates in Propositions \ref{2} and \ref{3} imply that%
\begin{equation}
w_{1}(t,s)=a+(v+w)(pt/q,s)=a_{0}+bi^{l_{0}}\cos(pt/q)\cos k_{0}s+\mathcal{O}%
_{H_{0}^{s}}(b^{2})\text{.} \label{sol}%
\end{equation}
The regularity of the solutions is obtained by the embedding $H_{0}^{s}\subset
C^{4}$ for $s\geq6$. The symmetries of $u=v+w$ follow from the symmetries of
$v$ in Propositions \ref{3}, i.e.%
\begin{align*}
u(t,s)  &  =u(-t,s)=u(t,-s)=u(t,s+2\pi/k_{0})\\
&  =Ru(t+l_{0}\pi,s)\text{.}%
\end{align*}
Finally, the symmetries of $w_{1}$ in the theorem follow from the symmetries
of $u$ after rescaling the period.
\end{proof}

\section{The range equation}

In this section, the range equation is solved as a fixed point $w(a,v)\in
H_{0}^{s}$ of the operator%
\[
Kw=-\left(  PLP\right)  ^{-1}\partial_{s}^{2}g(w+v,a)\text{.}%
\]
The key element in the proof consists in showing that
\[
\left(  PLP\right)  ^{-1}\partial_{s}^{2}:H^{s}\rightarrow H_{0}^{s}%
\]
is well defined and bounded. Once this result is established, the solution is
obtained by an application of the contraction mapping theorem to the nonlinear
operator
\[
Kw=\mathcal{O}(\varepsilon^{-1}\left\Vert w\right\Vert _{H_{0}^{s}}%
^{2}):B_{\rho}\subset H_{0}^{s}\rightarrow H_{0}^{s}.
\]

\begin{lemma}
Assume that $2\varepsilon<a_{0}^{-2}/2<1/q-2\varepsilon$ and $\left\vert
a^{-2}-a_{0}^{-2}\right\vert \lesssim\varepsilon$. Then, we have the estimate
\begin{equation}
\left\vert \lambda_{j,k,l}(a)\right\vert \gtrsim\varepsilon\left(
k^{2}+\left\vert j\right\vert \right)  \text{ for }\left(  j,k,l\right)  \in
N^{c}\text{.}%
\end{equation}

\end{lemma}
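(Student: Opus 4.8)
The plan is to bound $\lambda_{j,k,l}(a)=(pj/q)^2-k^4+(-1)^l a^{-2}k^2$ by hand, using that after clearing denominators its first two terms become a single integer. Write $n:=(pj)^2-q^2k^4\in\mathbb{Z}$, so that
\[
q^2\lambda_{j,k,l}(a)=n+(-1)^l q^2 a^{-2}k^2 .
\]
The hypotheses place $a^{-2}$ strictly between $\varepsilon$ and $1/q-\varepsilon$ (shrinking the implied constant in $|a^{-2}-a_0^{-2}|\lesssim\varepsilon$ if needed), hence $q^2\varepsilon k^2<q^2a^{-2}k^2<(q-q^2\varepsilon)k^2$. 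The case $k=0$ is immediate: a point $(j,0,l)\in N^c\subset\mathbb{Z}_0^2\times\mathbb{Z}_2$ has $j\neq 0$, and $\lambda_{j,0,l}(a)=(p/q)^2j^2\geq(p/q)^2|j|\gtrsim\varepsilon|j|=\varepsilon(k^2+|j|)$. So from now on $k\neq 0$, i.e. $k^2\geq 1$, and I would split on $n$.

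The key arithmetic input is that $|pj|$ and $qk^2$ are nonnegative integers, so they are either equal or at least $1$ apart; writing $|n|=\bigl|\,|pj|-qk^2\,\bigr|\,(|pj|+qk^2)$ this gives $n=0$ or $|n|\geq 2qk^2-1$ (with the sharper $|n|\geq 2qk^2+1$ when $n>0$). Case $n=0$: then $q^2|\lambda_{j,k,l}(a)|=q^2a^{-2}k^2>q^2\varepsilon k^2$ while $|j|=qk^2/p$, so $|\lambda_{j,k,l}(a)|>\varepsilon k^2\gtrsim\varepsilon(k^2+|j|)$. Case $n\neq 0$ with $(-1)^l n>0$ (the two terms in the identity have equal sign): there is no cancellation, $q^2|\lambda_{j,k,l}(a)|\geq|n|\geq 2qk^2-1\geq qk^2$, so $|\lambda_{j,k,l}(a)|\geq k^2/q$. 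The remaining, and only delicate, case is $n\neq 0$ with $(-1)^l n<0$, where $q^2|\lambda_{j,k,l}(a)|=\bigl|\,|n|-q^2a^{-2}k^2\,\bigr|$ and one must exclude near-cancellation. If $l=1$ then $n>0$ and $|n|\geq 2qk^2+1$, so $|n|-q^2a^{-2}k^2>(2qk^2+1)-(q-q^2\varepsilon)k^2\geq qk^2+1>0$. If $l=0$ then $n<0$ and $|n|\geq 2qk^2-1$, and here the upper bound $a^{-2}<1/q-\varepsilon$ is used essentially: $|n|-q^2a^{-2}k^2>(2qk^2-1)-(q-q^2\varepsilon)k^2=qk^2-1+q^2\varepsilon k^2\geq q^2\varepsilon k^2$. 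In all cases we conclude $|\lambda_{j,k,l}(a)|\geq\varepsilon k^2$ (and in fact $\geq k^2/q$ except in the two cases $n=0$ and $n<0,\,l=0$).

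It remains to convert part of this into a bound by $|j|$. From the displayed identity and $q^2a^{-2}<q$ we get $|n|\leq q^2|\lambda_{j,k,l}(a)|+qk^2$, and from $|\lambda_{j,k,l}(a)|\geq\varepsilon k^2$ we get $k^2\leq\varepsilon^{-1}|\lambda_{j,k,l}(a)|$ and $q^2 k^4\leq q^2\varepsilon^{-2}|\lambda_{j,k,l}(a)|^2$; since also $|\lambda_{j,k,l}(a)|\geq\varepsilon k^2\geq\varepsilon$, these combine (using $\varepsilon\leq 1$, $q\geq 1$) into $(pj)^2=n+q^2 k^4\lesssim q^2\varepsilon^{-2}|\lambda_{j,k,l}(a)|^2$, whence $|j|\lesssim\varepsilon^{-1}|\lambda_{j,k,l}(a)|$, i.e. $|\lambda_{j,k,l}(a)|\gtrsim\varepsilon|j|$. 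Averaging this with $|\lambda_{j,k,l}(a)|\geq\varepsilon k^2$ yields the assertion. I expect the main obstacle to be precisely the opposite-sign case $(-1)^l n<0$, which is the small-divisor-type configuration; what rescues it is the rigidity of the integer lattice — $|pj|$ cannot come near $qk^2$ without coinciding with it — so that the gap $|n|\geq 2qk^2-1$ dominates the tension term $q^2a^{-2}k^2<qk^2$, the saving margin being exactly the hypothesis $a^{-2}<1/q-\varepsilon$; and where the lattice offers no margin (at $n=0$, or the borderline instance of $n<0,\,l=0$) the lower bound $a^{-2}>\varepsilon$ supplies the $\varepsilon$-sized gap.
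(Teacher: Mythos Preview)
Your argument is correct and rests on the same arithmetic idea as the paper's: the quantity $(pj)^2-q^2k^4$ is an integer, so either it vanishes or it is at least $|pj|+qk^2$ in absolute value, and this gap beats the perturbing term $(-1)^l a^{-2}k^2$ precisely because $a^{-2}<1/q$. The execution differs in two respects worth noting. First, you work directly at $a$ rather than at $a_0$ followed by a perturbation step, and your finer case split on the sign of $(-1)^l n$ (together with the sharper bound $|n|\geq 2qk^2+1$ when $n>0$) yields uniform estimates with explicit constants, avoiding the paper's ``for $|j|+|k|$ large enough'' followed by adjusting constants on finitely many exceptional sites. In fact your bound $|\lambda_{j,k,l}(a)|\geq\varepsilon k^2$ holds for \emph{every} $(j,k,l)$ with $k\neq 0$, so under the stated hypothesis $a_0^{-2}\in(2\varepsilon,1/q-2\varepsilon)$ the set $N$ is actually empty and the restriction to $N^c$ is vacuous---something the paper's proof also implies but does not make explicit. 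Second, the paper obtains the $|j|$ contribution directly from the term $pj/q^2$ in its lower bound, whereas you recover it a posteriori by bounding $(pj)^2=n+q^2k^4$ in terms of $|\lambda|$; this is slightly more roundabout but entirely valid.
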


\begin{proof}
The inequality $\left\vert pj/q-k^{2}\right\vert \geq1/q$ is true unless
$pj/q=k^{2}$. In the case that $pj/q=k^{2}$, then $\lambda_{j,k,l}(a_{0})=\pm
a_{0}^{-2}k^{2}$ and%
\[
\left\vert \lambda_{j,k,l}(a_{0})\right\vert \gtrsim2\varepsilon k^{2}%
\gtrsim2\varepsilon\left(  k^{2}+\left\vert j\right\vert \right)  .
\]
We may assume that $j\geq0$, since the case $j\leq0$ follows by analogy. For
the case $\left\vert pj/q-k^{2}\right\vert \geq1/q$ and $j\geq0$, we have
\[
\left\vert \lambda_{j,k,l}(a_{0})\right\vert =\left\vert pj/q+\mu
_{k}\right\vert \left\vert pj/q-\mu_{k}\right\vert ,
\]
where $\mu_{k}=\sqrt{k^{4}\pm a_{0}^{-2}k^{2}}$. Since $\lim_{k\rightarrow
\infty}\left\vert k^{2}-\mu_{k}\right\vert =a_{0}^{-2}/2$, then%
\[
\left\vert pj/q-\mu_{k}\right\vert \geq\left\vert pj/q-k^{2}\right\vert
-\left\vert \mu_{k}-k^{2}\right\vert \geq\frac{1}{q}-\left(  \frac{1}{2}%
a_{0}^{-2}+\varepsilon\right)  >\varepsilon~,
\]
for $\left\vert j\right\vert +\left\vert k\right\vert \geq M$ with $M$ big.
Therefore, we have the estimate
\[
\left\vert \lambda_{j,k,l}(a_{0})\right\vert \geq\varepsilon\left\vert
pj/q+\mu_{k}\right\vert \geq c\varepsilon\left(  k^{2}+\left\vert j\right\vert
\right)
\]

We can adjust the constant $\varepsilon$ such that the estimate
\[
\left\vert \lambda_{j,k,l}(a)\right\vert \geq c\varepsilon\left(
k^{2}+\left\vert j\right\vert \right)
\]
is true for all $(j,k,l)\in N^{c}$. Since $\left\vert a^{-2}-a_{0}%
^{-2}\right\vert <c\varepsilon$ and%
\[
L(a)=L(a_{0})\pm\left(  a^{-2}-a_{0}^{-2}\right)  \partial_{s}^{2}~\text{,}%
\]
we conclude that
\[
\left\vert \lambda_{j,k,l}(a)\right\vert \geq\left\vert \lambda_{j,k,l}%
(a_{0})\right\vert -c\varepsilon k^{2}\geq c\varepsilon\left(  k^{2}%
+\left\vert j\right\vert \right)  \text{.}%
\]

\end{proof}

\begin{lemma}
Assume that $2\varepsilon<a_{0}^{-2}/2<1/q-2\varepsilon$ and $\left\vert
a^{-2}-a_{0}^{-2}\right\vert \lesssim\varepsilon$. The linear operator
$\left(  PLP\right)  ^{-1}\partial_{s}^{2}$ is continuous with%
\begin{equation}
\left\Vert \left(  PLP\right)  ^{-1}\partial_{s}^{2}w\right\Vert _{H_{0}^{s}%
}\lesssim\varepsilon^{-1}\left\Vert w\right\Vert _{H_{0}^{s}}\text{.}%
\end{equation}

\end{lemma}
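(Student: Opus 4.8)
The plan is to pass to the Fourier/eigenfunction basis, in which everything becomes diagonal. Since $L$ is diagonalized by the orthonormal complete system $e_l e^{i(jt+ks)}$ with eigenvalues $\lambda_{j,k,l}(a)$, since $P$ annihilates exactly the modes $(j,k,l)\in N$, and since $\partial_s^2$ acts as multiplication by $-k^2$, the composition $(PLP)^{-1}\partial_s^2$ is the Fourier multiplier
\[
(PLP)^{-1}\partial_s^2 w=\sum_{(j,k,l)\in N^c}\frac{-k^2}{\lambda_{j,k,l}(a)}\,\big\langle w,e_le^{i(jt+ks)}\big\rangle\,e_le^{i(jt+ks)}.
\]
First I would record this identity, stressing that $\partial_s^2 w$ is projected by $P$ before the inversion, so that one divides only by the nonzero eigenvalues indexed by $N^c$; in particular the sum ranges over $\mathbb{Z}_0^2\times\mathbb{Z}_2$, so the image has zero average and belongs to $H_0^s$.

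The heart of the matter is a uniform bound on this multiplier, and this is precisely where the previous lemma enters. Under the hypotheses $2\varepsilon<a_0^{-2}<1/q-2\varepsilon$ and $\left\vert a^{-2}-a_0^{-2}\right\vert\lesssim\varepsilon$, that lemma yields $\left\vert\lambda_{j,k,l}(a)\right\vert\gtrsim\varepsilon(k^2+\left\vert j\right\vert)\ge\varepsilon k^2$ for every $(j,k,l)\in N^c$, hence
\[
\left\vert\frac{k^2}{\lambda_{j,k,l}(a)}\right\vert\lesssim\frac{k^2}{\varepsilon(k^2+\left\vert j\right\vert)}\le\varepsilon^{-1}
\]
uniformly over $N^c$; for $k=0$ the multiplier vanishes and the bound is trivial. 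Thus $(PLP)^{-1}\partial_s^2$ scales each Fourier coefficient by a scalar of modulus at most a fixed constant times $\varepsilon^{-1}$.

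Finally I would invoke Parseval. The $H^s$ norm is the weighted $\ell^2$ sum of $\left\vert u_{j,k}\right\vert^2(j^2+k^2+1)^s$, and a diagonal multiplier leaves the weights $(j^2+k^2+1)^s$ untouched, so
\[
\big\Vert(PLP)^{-1}\partial_s^2 w\big\Vert_{H_0^s}^2=\sum_{(j,k,l)\in N^c}\left\vert\frac{k^2}{\lambda_{j,k,l}(a)}\right\vert^2\big\vert\langle w,e_le^{i(jt+ks)}\rangle\big\vert^2(j^2+k^2+1)^s\lesssim\varepsilon^{-2}\Vert w\Vert_{H^s}^2,
\]
which shows simultaneously that the operator is well defined (the series converges) and that it satisfies the claimed bound $\lesssim\varepsilon^{-1}$.

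The step I expect to demand the most attention is not analytic but a matter of bookkeeping: checking that $\partial_s^2 w$ is composed with $P$ so that the kernel modes in $N$ are discarded before inversion, and observing that the two-derivative gain $k^2$ furnished by $(PLP)^{-1}$ through the lower bound $\varepsilon(k^2+\left\vert j\right\vert)$ exactly cancels the two derivatives that $\partial_s^2$ produces, with no margin to spare. This borderline cancellation is exactly why $(PLP)^{-1}\partial_s^2$ is bounded yet gains no additional smoothing — and, as pointed out in the introduction, therefore fails to be compact — while still being the key estimate that makes the contraction argument for the range equation go through.
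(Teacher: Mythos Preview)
Your proof is correct and follows essentially the same approach as the paper: both invoke the previous lemma to obtain $\left\vert\lambda_{j,k,l}(a)\right\vert\gtrsim\varepsilon k^{2}$ on $N^{c}$ and then read off the $H_{0}^{s}$ bound in the diagonal Fourier basis. The only cosmetic difference is that the paper first establishes the coercivity $\Vert PLPw\Vert_{H_{0}^{s}}\gtrsim\varepsilon\Vert\partial_{s}^{2}Pw\Vert_{H_{0}^{s}}$, applies it to $(PLP)^{-1}w$, and then commutes $\partial_{s}^{2}$ with $(PLP)^{-1}$, whereas you write $(PLP)^{-1}\partial_{s}^{2}$ directly as the Fourier multiplier $-k^{2}/\lambda_{j,k,l}(a)$ and bound its symbol; your packaging is in fact a bit cleaner, since the commutation is automatic once both operators are recognized as multipliers.
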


\begin{proof}
By the previous lemma $\left\vert \lambda_{j,k,l}(a)\right\vert \gtrsim
\varepsilon k^{2}$ for $(j,k,l)\in N^{c}$. Then, the estimate
\[
\left\Vert \left(  PLP\right)  w\right\Vert _{H_{0}^{s}}\gtrsim\varepsilon
\left\Vert \partial_{s}^{2}Pw\right\Vert _{H_{0}^{s}}%
\]
holds true with $w\in H_{0}^{s}$. Applying this estimate to $\left(
PLP\right)  ^{-1}w\in D(L)\subset H_{0}^{s}$, we obtain%
\[
\left\Vert \partial_{s}^{2}\left(  PLP\right)  ^{-1}w\right\Vert _{H_{0}^{s}%
}\lesssim\varepsilon^{-1}\left\Vert Pw\right\Vert _{H_{0}^{s}}\text{.}%
\]
Since $H_{0}^{s}\subset C^{4}$ for $s\geq6$, then $\partial_{s}^{2}$ and
$\left(  PLP\right)  ^{-1}$ commute. Therefore, the operator $\left(
PLP\right)  ^{-1}\partial_{s}^{2}:PH^{s}\rightarrow PH_{0}^{s}\ $is well
define and bounded by $\mathcal{O}\left(  \varepsilon^{-1}\right)  $.
\end{proof}

\begin{proposition}
\label{2}Assume $a_{0}^{-2}\in\left(  0,2/q\right)  $. There is a unique
continuous solution $w(v,a)\in H_{0}^{s}$ of the range equation defined for
$(v,a)$ in a small neighborhood of $(0,a_{0})\in\ker L(a_{0})\times\mathbb{R}$
such that
\begin{equation}
\left\Vert w(v,a)\right\Vert _{H_{0}^{s}}\lesssim\varepsilon^{-1}\left\Vert
v\right\Vert ^{2}\text{,}%
\end{equation}
for small $\varepsilon$.
\end{proposition}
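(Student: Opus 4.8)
The plan is to solve the range equation
\[
PLPw+P\partial_{s}^{2}g(v+w)=0
\]
as a fixed point of the operator $Kw=-(PLP)^{-1}\partial_{s}^{2}\,P\,g(v+w,a)$ on a small ball $B_{\rho}\subset H_{0}^{s}$, using the Banach fixed point theorem. The first step is to fix $\varepsilon>0$ small enough that $2\varepsilon<a_{0}^{-2}<1/q-2\varepsilon$; since $a_{0}\in(0,\sqrt q)$ gives $a_{0}^{-2}\in(1/q,\infty)$ only when... in fact $a_{0}<\sqrt q$ is exactly $a_{0}^{-2}>1/q$, so one must instead note that for the non-resonant amplitudes of Proposition~\ref{4} one has $a_{0}^{-2}\in(0,1/q)$ and that $a_{0}\in(0,\sqrt q)$ is the complementary statement forcing the strict inequality $a_{0}^{-2}<1/q$; with $\varepsilon$ fixed, the previous two lemmas give that $(PLP)^{-1}\partial_{s}^{2}:H^{s}\to H_{0}^{s}$ is bounded by $C\varepsilon^{-1}$ on a neighbourhood of $a_{0}$. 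This is the ingredient that makes the whole scheme work: the two spatial derivatives lost in $\partial_{s}^{2}g$ are exactly recovered by the two derivatives gained from inverting $L$ on $N^{c}$.

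Next I would record the quadratic nature of the nonlinearity. Since $g(u)=a^{-2}u^{2}/(1-u)$ with $u=(x,y)$, on the ball $\left\Vert u\right\Vert_{H_{0}^{s}}<1/2$ the Banach algebra property for $s>1$ gives $\left\Vert g(u)\right\Vert_{H_{0}^{s}}\lesssim\left\Vert u\right\Vert_{H_{0}^{s}}^{2}$ and, by writing $g(u_{1})-g(u_{2})$ as an integral of $Dg$ along the segment, the Lipschitz estimate $\left\Vert g(u_{1})-g(u_{2})\right\Vert_{H_{0}^{s}}\lesssim(\left\Vert u_{1}\right\Vert_{H_{0}^{s}}+\left\Vert u_{2}\right\Vert_{H_{0}^{s}})\left\Vert u_{1}-u_{2}\right\Vert_{H_{0}^{s}}$. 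Combining with the operator bound, for $v$ small and $w\in B_{\rho}$ with $\rho$ small,
\[
\left\Vert Kw\right\Vert_{H_{0}^{s}}\lesssim\varepsilon^{-1}\left(\left\Vert v\right\Vert+\rho\right)^{2},\qquad
\left\Vert Kw_{1}-Kw_{2}\right\Vert_{H_{0}^{s}}\lesssim\varepsilon^{-1}\left(\left\Vert v\right\Vert+\rho\right)\left\Vert w_{1}-w_{2}\right\Vert_{H_{0}^{s}}.
\]
Choosing first $\rho=C'\varepsilon^{-1}\left\Vert v\right\Vert^{2}$ and then $\left\Vert v\right\Vert$ (together with $\left\vert a^{-2}-a_{0}^{-2}\right\vert$) small enough that $\varepsilon^{-1}(\left\Vert v\right\Vert+\rho)<1/2$, the map $K$ sends $B_{\rho}$ into itself and is a contraction there, so it has a unique fixed point $w(v,a)\in B_{\rho}$, giving the claimed bound $\left\Vert w(v,a)\right\Vert_{H_{0}^{s}}\lesssim\varepsilon^{-1}\left\Vert v\right\Vert^{2}$.

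Continuity (indeed local Lipschitz dependence) of $w(v,a)$ on $(v,a)$ then follows from the uniform contraction principle: the dependence of $Kw$ on $v$ enters only through $g(v+w)$, which is analytic, and the dependence on $a$ enters through the explicit prefactor $a^{-2}$ and through the resolvent $(PLP)^{-1}$, which is analytic in $a^{-2}$ on the neighbourhood where the spectral gap of the previous lemma holds (no eigenvalue crosses zero there). Standard estimates bound $\left\Vert K_{(v_{1},a_{1})}w-K_{(v_{2},a_{2})}w\right\Vert_{H_{0}^{s}}$ by a constant times $\left\Vert v_{1}-v_{2}\right\Vert+\left\vert a_{1}^{-2}-a_{2}^{-2}\right\vert$, and subtracting the two fixed-point identities and absorbing the contraction factor yields the continuity of $w$.

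The main obstacle is bookkeeping the order of quantifiers so that all the smallness requirements are compatible: $\varepsilon$ must be fixed first (depending on $a_{0}$, $p$, $q$) so that the spectral-gap lemmas apply with absolute constants, then $\rho$ is chosen proportional to $\varepsilon^{-1}\left\Vert v\right\Vert^{2}$, and only then may $\left\Vert v\right\Vert$ and $\left\vert a-a_{0}\right\vert$ be taken small enough to close the contraction and to keep $v+w$ inside the ball $\left\Vert\cdot\right\Vert_{H_{0}^{s}}<1$ on which $g$ is analytic. A secondary point to be careful about is that $\partial_{s}^{2}$ and $(PLP)^{-1}$ commute --- this is exactly the content of the second lemma and is what lets one regard $K$ as a map $H_{0}^{s}\to H_{0}^{s}$ rather than losing derivatives --- and that the projection $P$ is used so that $g(v+w)$, which a priori need not have zero average, is first projected before inverting $L$.
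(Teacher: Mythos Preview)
Your proposal is correct and follows essentially the same route as the paper: both solve the range equation by the contraction mapping theorem applied to $Kw=-(PLP)^{-1}\partial_{s}^{2}g(v+w,a)$, invoking the preceding lemmas for the $\mathcal{O}(\varepsilon^{-1})$ bound on $(PLP)^{-1}\partial_{s}^{2}$ and the Banach algebra property for the quadratic estimate on $g$. Your write-up is more detailed than the paper's (explicit Lipschitz bound on $g$, the uniform contraction principle for continuity in $(v,a)$, careful ordering of the smallness choices), and you correctly flagged an inconsistency in the hypothesis: the lemmas require $a_{0}^{-2}\in(0,1/q)$, which is $a_{0}>\sqrt{q}$, whereas the stated assumption $a_{0}\in(0,\sqrt{q})$ gives $a_{0}^{-2}>1/q$; the paper has the same slip in its own proof and in Proposition~\ref{4}, so this is a typo rather than a flaw in your argument.
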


\begin{proof}
By the Banach algebra property of $H^{s}$, the operator
\[
g(w)=\mathcal{O}(\left\Vert w\right\Vert _{H_{0}^{s}}^{2}):B_{\rho}\rightarrow
H^{s}%
\]
is well define in the domain $B_{\rho}=\{w\in H_{0}^{s}:\left\Vert
w\right\Vert _{H_{0}^{s}}<\rho\}$ for $\rho<1$. Since $a_{0}^{-2}\in\left(
0,2/q\right)  $, we can chose a small enough $\varepsilon$ such that the
hypothesis of the previous lemma hold true. Therefore,
\begin{align*}
Kw  &  =-\partial_{s}^{2}\left(  PLP\right)  ^{-1}g(w+v,a)=\mathcal{O}%
(\varepsilon^{-1}\left\Vert w\right\Vert _{H_{0}^{s}}^{2})\\
&  :B_{\rho}\subset PH_{0}^{s}\rightarrow PH_{0}^{s}\text{,}%
\end{align*}
is well defined and continuous. Moreover, it is a contraction for $\rho$ of
order $\rho=\mathcal{O}(\varepsilon)$. By the contraction mapping theorem,
there is a unique continuous fixed point $w(v,a)\in B_{\rho}$. The estimate
$\left\Vert w(v,a)\right\Vert _{H_{0}^{s}}\leq\varepsilon^{-1}\left\Vert
v\right\Vert ^{2}$ is obtained from
\[
\left\Vert Kw\right\Vert _{H_{0}^{s}}\lesssim\varepsilon^{-1}\left(
\left\Vert w\right\Vert _{H_{0}^{s}}^{2}+\left\Vert v\right\Vert ^{2}\right)
.
\]

\end{proof}

\begin{remark}
Since $\lambda_{j,k}\geq\varepsilon\left(  k^{2}+j\right)  $, the domain
$D(L)$ is compactly contained in $H_{0}^{s}$. However, we cannot prove the
global bifurcation by the classical Rabinowitz theorem because $\left(
PLP\right)  ^{-1}\partial_{s}^{2}g(u)$ is not compact, but only continuous.
This lack of compactness is the reason why we cannot obtain the regularity by
bootstrapping arguments. Instead, the regularity of the solutions is obtained
using the Sobolev embedding $H_{0}^{s}\subset C^{4}$ for $s\geq6$.
\end{remark}

\section{The bifurcation equation}

In this section, the bifurcation equation is solved by an application of the
Crandall-Rabinowitz theorem to the case of non-resonant $a_{0}$'s with
$a_{0}^{-2}\in\left(  0,2/q\right)  $.

\begin{definition}
\label{NR}An $a_{0}$ is \emph{non-resonant }for the lattice point
$(j_{0},k_{0},l_{0})\in\mathbb{N}^{2}\times\mathbb{Z}_{2}$ if
\begin{equation}
N\cap\left(  j_{0}\mathbb{Z}\times k_{0}\mathbb{Z\times Z}_{2}\right)
=\{\left(  \pm j_{0},\pm k_{0},l_{0}\right)  \}.
\end{equation}

\end{definition}

\begin{proposition}
\label{4}For each $q$, there is an infinite number of non-resonant $a_{0}$'s
for $k_{0}\in\mathbb{N}$, $j_{0}=1$ and $l_{0}=0$, given by
\begin{equation}
a_{0}^{-2}=\frac{2}{q}-\frac{1}{k_{0}^{2}q^{2}},\text{\qquad}p=qk_{0}^{2}-1.
\end{equation}

\end{proposition}

\begin{proof}
First we fix positive numbers $p$ and $q$. The condition
\[
a_{0}^{-2}=\left(  -1\right)  ^{l}\left(  k^{2}-\left(  pj/qk\right)
^{2}\right)  \in\left(  0,2/q\right)
\]
holds for the infinite number of lattice points $\left(  j,k,l\right)  $ with
$jp=qk^{2}-1$ and $l=0$. Then
\[
a_{0}^{-2}=k^{2}-\left(  k-\frac{1}{qk}\right)  ^{2}=\frac{2}{q}-\frac
{1}{k^{2}q^{2}}.
\]
$\allowbreak$

By Proposition (\ref{2}), there is a finite number of elements $\left(
j_{m},k_{m},l_{m}\right)  $ corresponding to a non-resonant amplitude $a_{0}$.
That is,%
\[
a_{0}^{-2}=\left(  -1\right)  ^{l_{m}}\left(  k_{m}^{2}-\left(  pj_{m}%
/qk_{m}\right)  ^{2}\right)
\]
for $m\in\{0,...,M\}$. Therefore, there is an infinite number of $a_{0}%
^{-2}\in\left(  0,2/q\right)  $ with a finite number of resonances.We say that
$(j_{0},k_{0})$ is a maximal lattice point if $j_{m}<j_{0}$ or $k_{m}<k_{0}$
when $j_{m}=j_{0}$ for $m\neq0$. Let $(j_{0},k_{0})$ be a maximal lattice
point such that
\[
a_{0}^{-2}=\left(  -1\right)  ^{l_{0}}\left(  k_{0}^{2}-\left(  pj_{0}%
/qk_{0}\right)  ^{2}\right)  ,
\]
then one has that
\[
N\cap\left(  j_{0}\mathbb{Z}\times k_{0}\mathbb{Z\times Z}_{2}\right)  =\{(\pm
j_{0},\pm k_{0},l_{0})\}\text{.}%
\]
Therefore, there is an infinite number of non-resonant $a_{0}$'s with
$a_{0}^{-2}\in\left(  0,2/q\right)  $.

The choice of a maximal $j_{0}$ is equivalent to choose a maximal
$p_{0}=pj_{0}$. That is, we have $a_{0}^{-2}=\left(  -1\right)  ^{l_{0}%
}\left(  k_{0}^{2}-\left(  p_{0}/qk_{0}\right)  ^{2}\right)  $ for the numbers
$p_{0}$ and $q$ and
\[
N\cap\left(  \mathbb{Z}\times k_{0}\mathbb{Z\times Z}_{2}\right)  =\{(\pm1,\pm
k_{0},l_{0})\}.
\]
Therefore, for each fixed $q$, and possibly different numbers $p_{0}$, there
is an infinite number of non-resonant amplitudes $a_{0}^{-2}\in\left(
0,2/q\right)  $ with $j_{0}=1$ and $l_{0}=0$.
\end{proof}

\begin{remark}
The choice of maximal $p_{0}$ leads to the choice of a minimal period $2\pi
q/p_{0}$ for the bifurcation. This argument is similar to the argument used in
\cite{HKiel} for the wave equation.
\end{remark}

To apply the Crandall-Rabinowitz theorem we need to reduce the bifurcation
equation to a subspace of dimension one. This is attained by exploiting the
equivariance of the problem. The equation is equivariant under the action of
the group $G=\mathbb{Z}_{2}\times O(2)\times O(2)$ given by
\[
\rho(\tau,\sigma)u(t,s)=u(t+\tau,s+\sigma)~,
\]
for the abelian part, and%
\[
\rho(\kappa_{1})u(t,s)=u(-t,s),\quad\rho(\kappa_{2})u(t,s)=u(t,-s),\quad
\rho(\kappa_{3})u(t,s)=Ru(t,s),
\]
for the reflections. By the uniqueness of $w(v,a)$, the bifurcation equation
has the same equivariant properties that the differential equation. This
property is used in the following proposition to reduce the bifurcation
equation to a subspace of dimension one.

\begin{proposition}
\label{3}Let $a_{0}^{-2}\in\left(  0,2/q\right)  $ be a non-resonant amplitude
for the lattice point\emph{ }$(1,k_{0},l_{0})\in\mathbb{N}^{2}\mathbb{\times
Z}_{2}$. The bifurcation equation has a local continuum of $2\pi q/p$-periodic
solution bifurcating from the initial configuration with amplitude $a_{0}$.
These solutions satisfy the estimates%
\begin{equation}
v(t,s)=be_{l_{0}}\cos t\cos k_{0}s+\mathcal{O}(b^{2})\text{,\qquad}%
a=a_{0}+\mathcal{O}(b^{2}),
\end{equation}
and symmetries%
\begin{equation}
v(t,s)=v(-t,s)=v(t,-s)=v(t,s+2\pi/k_{0})=Rv(t+l_{0}\pi,s)\text{.}%
\end{equation}

\end{proposition}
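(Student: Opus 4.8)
The plan is to substitute the range solution $w(v,a)$ from Proposition \ref{2} into the kernel equation, obtaining the bifurcation map
\[
B(v,a):=QL(a)Qv+Q\partial_{s}^{2}g\bigl(v+w(v,a)\bigr),\qquad(v,a)\in\ker L(a_{0})\times\mathbb{R},
\]
defined near $(0,a_{0})$, and to solve $B=0$ by restricting to the fixed-point subspace of a finite symmetry group on which it becomes a one-dimensional equation amenable to the Crandall--Rabinowitz theorem.

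First I would record equivariance. Since $w(v,a)$ is the unique small solution of the range equation and that equation inherits the $G$-equivariance of the differential equation under $G=\mathbb{Z}_{2}\times O(2)\times O(2)$, the map $B$ is $G$-equivariant. Let $\Gamma\subset G$ be the subgroup generated by $\kappa_{1}$ ($t\mapsto-t$), $\kappa_{2}$ ($s\mapsto-s$), the rotation $\rho(0,2\pi/k_{0})$ ($s\mapsto s+2\pi/k_{0}$), and the twist $\kappa_{3}\rho(l_{0}\pi,0)$, i.e.\ $u\mapsto Ru(\cdot+l_{0}\pi,\cdot)$; the last generator squares to $\rho(2l_{0}\pi,0)=\mathrm{id}$, so $\Gamma$ is finite and $\operatorname{Fix}(\Gamma)$ is exactly the space of functions satisfying the symmetries in the proposition. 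By equivariance $B$ maps $V_{\Gamma}:=\operatorname{Fix}(\Gamma)\cap\ker L(a_{0})$ into itself, so it suffices to solve $B=0$ on $V_{\Gamma}$.

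The central point is that $V_{\Gamma}=\mathbb{R}\,\phi_{0}$ with $\phi_{0}=e_{l_{0}}\cos t\cos k_{0}s$. Indeed, a kernel mode $e_{l}e^{i(jt+ks)}$, $(j,k,l)\in N$, that is invariant under $\kappa_{1}$ and $\kappa_{2}$ is a combination of the real modes $e_{l}\cos(jt)\cos(ks)$ with $j,k\ge0$; invariance under $\rho(0,2\pi/k_{0})$ forces $k_{0}\mid k$; then the non-resonance hypothesis $N\cap(\mathbb{Z}\times k_{0}\mathbb{Z}\times\mathbb{Z}_{2})=\{(\pm1,\pm k_{0},l_{0})\}$ leaves only $(j,k,l)=(1,k_{0},l_{0})$; and $\kappa_{3}\rho(l_{0}\pi,0)$ fixes $\phi_{0}$ because it acts on it by the scalar $(-1)^{l_{0}}(-1)^{l_{0}}=1$. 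Writing $v=b\phi_{0}$ reduces $B|_{V_{\Gamma}}=0$ to a scalar equation $f(b,a)=0$ determined by $B(b\phi_{0},a)=f(b,a)\phi_{0}$, with $f(0,a)\equiv0$, $\partial_{b}f(0,a)=\lambda_{1,k_{0},l_{0}}(a)=:\mu(a)$, $\mu(a_{0})=0$, and $\mu'(a_{0})=(-1)^{l_{0}+1}2k_{0}^{2}a_{0}^{-3}\neq0$.

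Factoring $f(b,a)=b\,\tilde f(b,a)$ with $\tilde f(0,a)=\mu(a)$ and applying the implicit function theorem at $(0,a_{0})$ — this is the Crandall--Rabinowitz theorem in the one-dimensional reduction, with transversality supplied by $\mu'(a_{0})\neq0$ — yields a local continuum of nontrivial solutions $(b,a(b))$, $b\in[0,b_{0}]$, with $a(0)=a_{0}$ and $v=b\phi_{0}$, which already gives the stated symmetries and form of $v$. To improve $a(0)=a_{0}$ to $a=a_{0}+\mathcal{O}(b^{2})$ I would use that $\rho(\pi,0)$ (translation by $\pi$ in $t$) normalizes $\Gamma$, hence preserves $\operatorname{Fix}(\Gamma)$, and acts on $\phi_{0}$ by $-1$; since $B$ is $\rho(\pi,0)$-equivariant, $f$ is odd in $b$, so $\tilde f$ is even and the branch satisfies $a(b)=a(-b)$. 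The symmetries of $u=v+w$ then follow from those of $v$ together with the $G$-equivariance of $w$, and the statements for $w_{1}$ in Theorem \ref{1} follow after rescaling time by $\nu$. I expect the crux to be the identification $V_{\Gamma}=\mathbb{R}\phi_{0}$: the group $\Gamma$ must be chosen so that $B$ is $\Gamma$-equivariant, $\operatorname{Fix}(\Gamma)\cap\ker L(a_{0})$ is \emph{exactly} one dimensional — which is precisely where non-resonance (and the reduction $j_{0}=1$ of Proposition \ref{4}) is used, to discard the kernel modes that resonances would otherwise introduce — while still leaving room for the extra reflection needed to make $a$ even in $b$; the remaining steps are routine.
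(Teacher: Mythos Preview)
Your proposal is correct and follows essentially the same approach as the paper: restrict the bifurcation equation to the fixed-point space of a symmetry subgroup, use the non-resonance hypothesis (with $j_{0}=1$) to make that fixed-point space one-dimensional, apply Crandall--Rabinowitz with the transversality $\partial_{a}\lambda_{1,k_{0},l_{0}}(a_{0})\neq0$, and then use the time shift by $\pi$ acting as $-1$ on $\phi_{0}$ to upgrade $a=a_{0}+\mathcal{O}(b)$ to $\mathcal{O}(b^{2})$. The only cosmetic difference is the choice of generators: the paper uses $S=\langle\kappa_{1},\kappa_{2},(l_{0}\pi,\kappa_{3}),(\pi,\pi/k_{0})\rangle$, whereas you use $\rho(0,2\pi/k_{0})$ in place of $(\pi,\pi/k_{0})$; since $(\pi,\pi/k_{0})^{2}=(0,2\pi/k_{0})$ and both choices yield the same one-dimensional fixed space, this is immaterial, and in fact your choice matches the symmetries stated in the proposition verbatim.
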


\begin{proof}
In the Fourier basis, the action of $G$ is given by%
\[
\rho(\varphi)u_{j,k}=e^{ij\varphi}u_{j,k},\qquad\rho(\theta)u_{j,k}%
=e^{ik\theta}u_{j,k},
\]
for the abelian part and%
\[
\rho(\kappa_{1})u_{j,k}=u_{-j,k},\qquad\rho(\kappa_{2})u_{j,k}=u_{j,-k}%
,\qquad\rho(\kappa_{3})u_{j,k}=Ru_{j,k},
\]
for the reflections. Setting $u_{j,k}=\left(  u_{j,k,0},u_{j,k,1}\right)  $,
the irreducible representations correspond to the subspaces generated by
$(u_{j,k,l},u_{j,-k,l})\in\mathbb{C}^{2}$. Indeed, the linear operator $L$ has
blocks $\lambda_{j,k,l}I$ in these irreducible representations, which is
predicted by Schur's lemma.

Set the irreducible representation
\[
(u_{1},u_{2})=(u_{1,k_{0},l_{0}},u_{1,-k_{0},l_{0}})\text{.}%
\]
The action of the group in this representation is%
\[
\rho(\varphi)(u_{1},u_{2})=e^{i\varphi}(u_{1},u_{2}),\qquad\rho(\theta
)(u_{1},u_{2})=(e^{ik_{0}\theta}u_{1},e^{-ik_{0}\theta}u_{2}),
\]
and%
\[
\rho(\kappa_{1})(u_{1},u_{2})=(\bar{u}_{2},\bar{u}_{1}),~~\rho(\kappa
_{2})(u_{1},u_{2})=(u_{2},u_{1}),~~\rho(\kappa_{3})(u_{1},u_{2})=\left(
-1\right)  ^{l_{0}}(u_{1},u_{2}).
\]
Therefore, the group%
\[
S=\left\langle \kappa_{1},\kappa_{2},\left(  l_{0}\pi,\kappa_{3}\right)
,(\pi,\pi/k_{0})\right\rangle
\]
has fixed point space $(u_{1},u_{2})=(b,b)$ for $b\in\mathbb{R}$ in this representation.

Set
\[
\ker L^{S}(a_{0}):=\ker L(a_{0})\cap\emph{Fix~}(S)\text{.}%
\]
The bifurcation equation%
\begin{equation}
QLQw+Q\partial_{s}^{2}g(v+w(v,a)):\ker L^{S}(a_{0})\times\mathbb{R}%
\rightarrow\ker L^{S}(a_{0}) \label{BE}%
\end{equation}
is well defined by the equivariant properties. Since for a non-resonant
amplitude $a_{0}$, the kernel consist of the subspace $(u_{1},u_{2})=(b,b)$
for $b\in\mathbb{R}$, then the kernel in the fixed point space of $S$ is
generated by the simple eigenfunction%
\[
\sum_{\left(  j,k,l\right)  \in N}e_{l}e^{i(jt+ks)}=4e_{l_{0}}\cos j_{0}t\cos
k_{0}s\text{.}%
\]
Therefore,%
\[
\ker L^{S}(a_{0})=\{be_{l_{0}}\cos j_{0}t\cos k_{0}s:b\in\mathbb{R}\}\text{.}%
\]

Since $\ker L^{S}(a_{0})$ has dimension one, the local bifurcation for $a$
close to $a_{0}$ follows from the Crandall-Rabinowitz theorem applied to the
bifurcation equation (\ref{BE}). It is only necessary to verify that
$\partial_{a}L(a)\left(  e_{l_{0}}\cos j_{0}t\cos k_{0}s\right)  $ is not in
the range of $L$. This follows from%
\[
\partial_{a}L(a)\left(  e_{l_{0}}\cos j_{0}t\cos k_{0}s\right)  =-2a^{-3}%
k_{0}^{2}Re_{l_{0}}\left(  \cos j_{0}t\cos k_{0}s\right)  \in\ker L(a_{0}).
\]

The estimates $a=a_{0}+\mathcal{O}(b)$ and
\[
v(t,s)=be_{l_{0}}\cos j_{0}t\cos k_{0}s+\mathcal{O}(b^{2})
\]
are consequence of the Crandall-Rabinowitz theorem. Moreover, the $S^{1}%
$-action of the element $\varphi=\pi/j_{0}$ in the kernel generated by
$e_{l_{0}}\cos j_{0}t\cos k_{0}s$ is given by $\rho(\varphi)=-1$. This
symmetry implies that the bifurcation equation is odd and $a=a_{0}%
+\mathcal{O}(b^{2})$.
\end{proof}

\section{Traveling waves}

The irreducible representation $(u_{1,k_{0},l_{0}},u_{1,-k_{0},l_{0}})$ has
another isotropy group given by%
\[
T=\left\langle \kappa_{1}\kappa_{2},\left(  l_{0}\pi,\kappa_{3}\right)
,(\varphi,-\varphi/k_{0})\right\rangle .
\]
This isotropy group has a one dimensional fixed point space corresponding to
$(u_{1},u_{2})=(b,0)$ for $b\in\mathbb{R}$. Solutions with isotropy group $T$
are traveling waves of the form $u(\nu t+s)$ for $k_{0}=1$.

For these traveling waves, the PDE becomes the ODE%
\begin{equation}
-u^{\prime\prime}-\nu^{2}u+a^{-2}Ru+g(u)=0\text{.}%
\end{equation}
The spectrum of the linear operator associated to the bifurcation problem is%
\[
\lambda_{j}=j^{2}-\nu^{2}+(-1)^{l}a^{-2}\text{.}%
\]
Actually, the global bifurcation of traveling waves for filaments has been
proven in \cite{GaIz12} applying equivariant degree theory to the reduced ODE.
In a similar manner, one can prove the following theorem.

\begin{theorem}
The equation (\ref{EQ}) has a \emph{global bifurcation} of traveling waves
starting from the initial configuration $u=a$ with frequency
\[
\nu_{0}=\sqrt{1+(-1)^{l}a^{-2}}\in\mathbb{R}^{+}\text{.}%
\]
The local bifurcation can be parameterized by $b$ with the estimate
$\nu(b)=\nu_{0}+\mathcal{O}(b^{2})$ and%
\[
u(\nu t+s)=a+bi^{l}\cos\left(  \nu t+s\right)  +\mathcal{O}_{C^{4}}%
(b^{2})\text{.}%
\]

\end{theorem}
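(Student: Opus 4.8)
The plan is to reduce the PDE \eqref{EQ} for traveling-wave profiles $u(\nu t+s)$ to the autonomous second-order ODE system $-u''-\nu^2 u + a^{-2}Ru + g(u)=0$ displayed above, and then invoke the equivariant global bifurcation result of \cite{GaIz12} essentially verbatim. First I would substitute $w_1(t,s)=u(\nu t+s)$ (with $k_0=1$ so the spatial period matches the $2\pi$-boundary condition, and writing $\xi=\nu t+s$) into \eqref{EQ}; since $\partial_t^2 w_1 = \nu^2 u''$ and $\partial_s^4 w_1 = u''''$ and $\partial_s^2(|w_1|^{-2}w_1) = (|u|^{-2}u)''$, the equation becomes $\nu^2 u'' = -u'''' + (|u|^{-2}u)''$, i.e. $(\,\nu^2 u + u'' - |u|^{-2}u\,)''=0$. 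Integrating twice in $\xi$ (and using that we seek a bounded $2\pi$-periodic profile, so both constants of integration can be absorbed into the center-of-mass motion $w_2$ exactly as in \eqref{w}), one gets $u'' + \nu^2 u - |u|^{-2}u = 0$; splitting $|u|^{-2}u = Ru + (\,|u|^{-2}u - Ru\,)$ with $g(u):=|u|^{-2}u-Ru = \mathcal{O}(|u-a|^2)$ analytic near $u=a$, gives precisely the stated ODE after moving terms, so the profile equation is $-u''-\nu^2 u + a^{-2}Ru + g(u)=0$ once one linearizes around the constant solution $u\equiv a$ (note $|a|^{-2}a = a^{-2}\cdot a$, consistent with the $R$-eigenvalue bookkeeping).

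Next I would record the linearization at $u\equiv a$: writing $u=a+\tilde u$ and keeping linear terms, the operator is $-\partial_\xi^2 - \nu^2 + (-1)^l a^{-2}$ on the $l$-th $R$-eigenspace, whose symbol on the Fourier mode $e^{ij\xi}$ is $\lambda_j = j^2-\nu^2+(-1)^l a^{-2}$, matching the displayed spectrum. The bifurcation values are those $\nu$ at which some $\lambda_{\pm 1}=0$, i.e. $\nu_0^2 = 1 + (-1)^l a^{-2}$, giving $\nu_0=\sqrt{1+(-1)^l a^{-2}}$. As in Proposition~\ref{3}, the relevant reduced kernel (after restricting to the fixed-point space of the isotropy group $T$, which forces $u_2=0$ and leaves $(u_1,u_2)=(b,0)$) is one-dimensional, spanned by $e_l\cos\xi$ for $l_0=0$ or $ie_{l}\cos\xi$ for $l_0=1$; the transversality condition $\partial_\nu \lambda_{\pm1}|_{\nu_0} = -2\nu_0 \neq 0$ is immediate, so the Crandall–Rabinowitz theorem already yields the \emph{local} branch with the estimates $\nu(b)=\nu_0+\mathcal{O}(b^2)$ (the $\mathcal{O}(b^2)$ rather than $\mathcal{O}(b)$ coming again from the $S^1$-action $\varphi\mapsto\pi$ acting as $-1$ on the kernel, making the reduced equation odd) and $u(\xi)=a+bi^l\cos\xi+\mathcal{O}(b^2)$. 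Regularity $u\in C^4$ is automatic here since the ODE is autonomous and elliptic in $\xi$: any $H^s$ periodic solution with $s>1/2$ is smooth by bootstrapping $u''=-\nu^2u+a^{-2}Ru+g(u)$.

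For the \emph{global} statement I would appeal directly to \cite{GaIz12}: the reduced ODE is a second-order system with an $O(2)\times\mathbb{Z}_2$ (or $S^1\times\cdots$) equivariance exactly of the type treated there, the nonlinearity $g$ is analytic and sublinear on the region $|u-a|<|a|$, and the linearized operator $-\partial_\xi^2-\nu^2+a^{-2}R$ on the circle has compact resolvent, so the equivariant degree / Rabinowitz continuation applies and produces an unbounded (in the Rabinowitz sense) continuum emanating from $(\nu_0,a)$. I expect the only genuinely delicate point to be matching conventions with \cite{GaIz12} — in particular checking that the isotropy subgroup $T$ does single out the traveling-wave fixed-point space $(b,0)$ and that no secondary resonances (other modes $j$ with $\lambda_j=0$ at $\nu_0$) spoil the simplicity of the kernel; but since for fixed $a$ the equation $j^2 = \nu_0^2 - (-1)^l a^{-2} = 1$ forces $j=\pm1$ uniquely, the kernel is automatically simple and no non-resonance hypothesis on $a$ is needed, which is why this statement is cleaner than Theorem~\ref{1}. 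The remaining work is then purely bookkeeping of the symmetries, which I would carry out in parallel with the proof of Proposition~\ref{3}.
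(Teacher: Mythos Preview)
Your approach is exactly the paper's: the paper does not prove this theorem in detail but simply observes that the traveling-wave ansatz reduces \eqref{EQ} to the displayed second-order ODE and then states that the result follows ``in a similar manner'' from the equivariant degree theory of \cite{GaIz12}, which is precisely what you do (with the added bonus of spelling out the local Crandall--Rabinowitz step). One small point to tighten: the two integration constants are not ``absorbed into $w_2$''---equation \eqref{v} for $w_1$ is closed---but rather $C_1=0$ by periodicity and $C_2$ is fixed by evaluating at the equilibrium $u\equiv a$, after which passing to the perturbation variable yields the stated ODE; also your non-resonance check only covers the $l$-eigenspace, though for the global statement via equivariant degree this does not matter.
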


Observe that the set of traveling waves forms a two-dimensional family
parameterized by amplitude $a$ and frequency $\nu$, while standing waves exist
for an infinite number of local and continuous curves that are parameterized
by amplitude $a$ and have fixed rational frequency $\nu$.

\noindent\textbf{Acknowledgement.} The author is grateful to W. Craig and H.
Kielh\"{o}fer for useful discussions related to this project. This project is
supported by PAPIIT-UNAM grant IA105217.

\end{document}